\documentclass[12pt,a4paper]{article}
\usepackage{amsmath, amsthm, amscd, amsfonts, amssymb, graphicx, color}
\usepackage{accents}
\usepackage[bookmarksnumbered, colorlinks, plainpages]{hyperref}

\makeatletter
\@namedef{subjclassname@2010}{%
	\textup{2010} Mathematics Subject Classification}
\makeatother
\usepackage[mathscr]{euscript}
\usepackage{latexsym}
\usepackage{float}
\usepackage{multicol,tabularx}
\usepackage[none]{hyphenat}
\usepackage{tikz}
\usepackage{graphics}
\usepackage{mathtools}

\usepackage{amsfonts}
\usepackage{amssymb}
\newtheorem{theorem}{Theorem}
\newtheorem{observation}{Observation}
\newtheorem*{conjecture*}{Conjecture}
\usepackage[english]{babel}
\usepackage{multido}
\usepackage[nomessages]{fp}
\usepackage[margin=2.5cm]{geometry}
\usepackage{enumitem}

\newtheorem*{lemma*}{Lemma}

\newtheorem*{coro*}{Corollary}
\theoremstyle{definition}

\newtheorem*{defn*}{Definition}

\newtheorem*{res*}{Theorem}
\numberwithin{equation}{section}
\begin{document}
\title{On a conjecture about the strong odd chromatic number of planar graphs}

\author {Arun J Manattu \footnote{Email: arunjmanattu@gmail.com},~ Athira Vinay \footnote{E-mail: athiravinay99@gmail.com}~ and Aparna Lakshmanan S. \footnote{E-mail: aparnals@cusat.ac.in, aparnaren@gmail.com}\\
Department of Mathematics\\	Cochin University of Science and Technology\\Cochin -
	22}
\maketitle
\begin{abstract}
A proper coloring of a graph $G$ is said to be a strong odd coloring of $G$, if for every vertex $v$ and every color $c$, either $c$ appears on an odd number of vertices in the neighborhood of $v$ or $c$ is absent in the neighborhood of $v$. The strong odd chromatic number of $G$ is defined as the smallest integer $k$ for which $G$ admits a strong odd coloring using $k$ colors. In this paper, we evaluate the strong odd chromatic number of join of cycles and empty graphs and one point union of graphs. Using these results, we construct infinite family of planar graphs that serves as counter examples to a recent conjecture regarding the upper bound of the strong odd chromatic number of planar graphs. \\
\noindent \line(1,0){395}\\
\noindent {\bf Keywords:} Strong odd coloring, Strong odd chromatic number, Join of Graphs, One point union of graphs, Planar graphs

\noindent{\bf AMS Subject Classification:} Primary: 05C15, 05C10; Secondary: 05C76 

\noindent \line(1,0){395}
\end{abstract}

\maketitle
\section{Introduction}
A proper $k$-coloring of a graph $G$ is an assignment $\phi: V(G) \rightarrow \{1,2,\ldots,k\}$ such that no two adjacent vertices receive the same color. A recently introduced coloring concept called the odd coloring, which is a strengthening of proper coloring, has created a great amount of interest among researchers. Defined by Petruševski and Škrekovski \cite{Pet} in 2022, the odd coloring is a proper coloring such that for every non-isolated vertex $v$ in G, there exists a color that occurs an odd number of times in the neighborhood of $v$. A graph is odd $k$-colorable if it admits an odd coloring using $k$ colors. The odd chromatic number of a given graph $G$, denoted by $\chi_{\text{o}}(G)$, is the minimum number of colors in an odd coloring of $G$. Determining the odd chromatic number of a graph is in general NP-hard \cite{Caro 2} and it is NP-complete \cite{Ahn} even when restricted to the class of odd $k$-colorable bipartite graphs for $k\geq 3$. Caro et al.\cite{Caro 2} conjectured that an upper bound of the odd chromatic number of a graph $G$, with $\Delta(G) \geq 3$, is $\Delta(G)+1$ and Dai et al. \cite{Dai} proved that the bound is true asymptotically. The Four Color Theorem \cite{App, Rob}, though lacking an austere theoretical proof, guarantees that for any planar graph $G$, the chromatic number $\chi(G) \leq 4$. The quest for an analogous bound for the odd chromatic number of planar graphs immediately fails, since $\chi_{\text{o}}(C_5) =5$. Petruševski and Škrekovski \cite{Pet} proved that every planar graph is odd $9$-colorable and proposed the conjecture that every planar graph is odd $5$-colorable. The best upper bound known for a planar graph $G$ is $\chi_{\text{o}}(G) \leq 8$, proved by Petr and Portier \cite{Petr} in 2023. While considering classes of planar graphs, Miao et al. \cite{Miao} claimed in 2024 that every planar graph without adjacent $3$-cycles admits an odd-$7$ coloring and every triangle free planar graph without intersecting $4$-cycles admits an odd $5$-coloring. But recently, Pradhan et. al. \cite{Prad} pointed out a fundamental flaw contained in the proof in \cite{Miao} affecting the validity of their claims.\par

A stronger version of odd coloring is the strong odd coloring defined in 2024 by Kwon and Park \cite{Kwon}. A proper coloring of the vertices of $G$ is a strong odd coloring if for every vertex $v$ in $G$, every color $c$ present in the neighborhood of $v$ appears an odd number of times in the neighborhood of $v$, i.e., if $\chi_1,\chi_2,\ldots,\chi_k$ are the color classes of a proper coloring of $G$, then $N(v) \cap \chi_i$ is either empty or $|N(v) \cap \chi_i|$ is odd for every $i \in [k]$ and $v \in V(G)$. Every simple graph admits a strong odd coloring since using different colors for every vertex of $G$ is a trivial strong odd coloring of $G$. The strong odd chromatic number of $G$, denoted by $\chi_{\text{so}}(G)$, is the minimum $k$ such that $G$ admits a strong odd coloring with $k$ colors. For any subgraph $H$ of a graph $G$ we have $\chi(H) \leq \chi(G)$, but a similar result does not hold for the strong odd chromatic number. For instance, $\chi_{\text{so}}(C_4)$ = 4, but Figure \ref{C4 corona K1} is a supergraph of $C_4$ whose strong odd chromatic number is 2. Now, strong odd coloring being a proper coloring, it is a direct observation that chromatic number of a graph $G$ is always a lower bound for the strong odd chromatic number of $G$. In fact, we can further prove that for every graph $G$, there exists a graph $H$ with the property that $\chi_{\text{so}}(H) = \chi(G)$ and $G$ is an induced subgraph of $H$. Consider a proper coloring of $G$ using $\chi(G)$ colors and let $\chi_1,\chi_2,\ldots,\chi_k$ be the color classes. For every $v \in V(G)$ with $|N(v) \cap \chi_i|$ even, attach a pendant vertex adjacent to $v$ and color it with $i$. For each vertex $v$, the number of pendant vertices attached will be equal to the number of color classes in $G$ that appears on even number of vertices in the neighborhood of $G$ in the initial proper coloring of $G$. The resulting graph $H$ will be a supergraph of $G$ with strong odd chromatic number $\chi(G)$. Figure \ref{C4 corona K1} explicitly demonstrates the case when $G = C_4$. \par

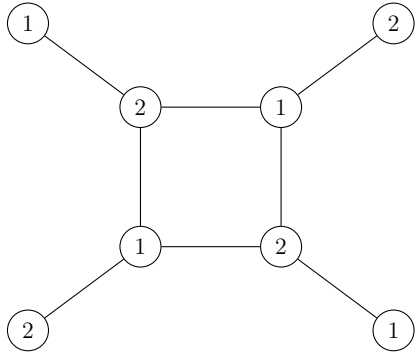
\begin{figure}[H] \label{C4 corona K1}
\centering
\begin{tikzpicture}[x=5cm, y=5cm,scale=0.74, transform shape]
         \node[circle, draw] (a1) at (0,0) {$1$};
\node[circle, draw] (a2) at (0,0.5)  {$2$};
\node[circle, draw] (a3) at (0.5,0.5) {$1$};
\node[circle, draw] (a4) at (0.5,0) {$2$};
\node[circle, draw] (b1) at (-0.4,-0.3)  {$2$};
\node[circle, draw] (b2) at (-0.4,0.8) {$1$};
\node[circle, draw] (b3) at (0.9,0.8) {$2$};
\node[circle, draw] (b4) at (0.9,-0.3) {$1$};

\draw (a1) edge (a2);
\draw (a2) edge (a3);
\draw (a3) edge (a4);
\draw (a4) edge (a1);

\draw (a1) edge (b1);
\draw (a2) edge (b2);
\draw (a3) edge (b3);
\draw (a4) edge (b4);

\end{tikzpicture}
\vspace{-0.3cm}
\caption{Construction of $H$ with $\chi_{\text{so}}(H) = \chi(C_4)$}
\end{figure}

While Kwon and Park \cite{Kwon} investigated the strong odd coloring of sparse graphs in their seminal paper, Michał Pilipczuk \cite{Michal} explored the same on graph classes of bounded expansion. Considering the $8$-odd colorability of planar graphs \cite{Petr}, Kwon and Park \cite{Kwon} posed the question whether the strong odd chromatic number of planar graphs is bounded above by some constant $c$. Caro, Petruševski, Škrekovski and Tuza \cite{Caro} proved that such a finite upper bound $c$ exists, which is 388. They also constructed two planar graphs both having strong odd chromatic number 12 and asked whether $\chi_{\text{so}}(G) \leq 12$ for every planar graph $G$. Pang, Miao and Fan \cite{Pang} recently gave a negative answer to this question by constructing a planar graph with strong odd chromatic number 13. In the same paper, they conjectured \cite{Pang} that every planar graph admits a strong odd 13-coloring. In this paper, we disprove this conjecture by producing infinite family of planar graphs with strong odd chromatic number 14. We also have examples of planar graphs for which strong odd chromatic number 15, 16 and 17.\par
  
In general, the exact values of the strong odd chromatic number of very few classes have been identified until this point. Similar to odd coloring \cite{Priya, Vida}, the strong odd coloring of product graphs has been predominantly explored for the Cartesian product, as well as for the lexicographic product \cite{Caro}. In this paper, we analyze the strong odd coloring of join of cycles and empty graphs. We also investigate the strong odd chromatic number of graphs formed by one point union of other graphs. This study eventually led to the construction of infinitely many planar graphs $G$ with $\chi_{\text{so}}(G) > 13$, thus disproving the conjecture \cite{Pang} posed by Pang et al. 
\section{Join of cycles and empty graphs}
\begin{defn*}
 The join $G = G_1 \vee G_2$ of two graphs $G_1$ and $G_2$ with vertex sets $V(G_1)$ and $V(G_2)$ is a graph formed by taking one copy each of $G_1$ and $G_2$ and making every vertex in $V(G_1)$ and $V(G_2)$ adjacent to each other.   
\end{defn*}

In this section, the strong odd chromatic number of join of cycles and empty graphs are determined. It maybe noted that, if there are at most 2 vertices in the empty graph under consideration, the resulting graph remains planar.  Now, while considering a coloring of a graph $G$ having a subgraph $H$, a color class $C$ is said to be an \textit{odd (respectively, even)} color class of $H$ if $|C \cap V(H)|$ is odd (even). Here, while considering the join of a cycle $C_n$ and an empty graph $\overline{K_m}$, the vertices in $\overline{K_m}$ being adjacent to all vertices in $C_n$, an even color class of $C_n$ would then contradict strong odd coloring of $C_n \vee \overline{K_m}$. Hence, the following observation, though direct, can be stated explicitly for convenience as we are using it repeatedly. 
\begin{observation} \label{H odd}
    In every strong odd coloring of $C_n \vee \overline{K_m}$, every color appearing in $C_n$ must be an odd color class of $C_n$.
\end{observation}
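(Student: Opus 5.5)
The plan is to argue directly from the definition of strong odd coloring, using any vertex of the empty graph as a witness. Fix a strong odd coloring of $C_n \vee \overline{K_m}$ with $m \geq 1$, and let $u$ be any vertex of $\overline{K_m}$. By the definition of the join, $u$ is adjacent to every vertex of $C_n$. Since $\overline{K_m}$ has no edges, $u$ has no neighbours inside $\overline{K_m}$. Hence $N(u) = V(C_n)$ exactly.

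Now take any color $c$ that appears on some vertex of $C_n$, and let $\chi_c$ denote its color class. Then
\[
N(u) \cap \chi_c = V(C_n) \cap \chi_c,
\]
and this set is nonempty because $c$ appears in $C_n$. The strong odd condition at $u$ says that $|N(u) \cap \chi_c|$ is either $0$ or odd. Since it is nonempty, $|V(C_n) \cap \chi_c|$ is odd. By the terminology introduced just before the observation, this means $\chi_c$ is an odd color class of $C_n$, which is the claim.

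There is no real obstacle here. The only point needing care is the implicit hypothesis $m \geq 1$: when $m = 0$ there is no witness vertex $u$, and the statement can fail (for example, $\chi_{\text{so}}(C_4)=4$ is attained by a coloring in which color classes are singletons, but other strong odd colorings of a bare cycle need not have odd classes). So I would state explicitly that $\overline{K_m}$ is nonempty, as is intended throughout this section. The argument never uses the cycle structure of $C_n$; it applies verbatim to $H \vee \overline{K_m}$ for any graph $H$.
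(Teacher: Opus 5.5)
Your proof is correct and is the same argument the paper gives: any vertex of $\overline{K_m}$ has neighbourhood exactly $V(C_n)$, so the strong odd condition at that vertex forces every color present on $C_n$ to occur an odd number of times there. Your aside that $m \geq 1$ is needed is right, although $C_4$ does not illustrate it, since every strong odd coloring of $C_4$ uses four singleton classes; a valid example is $C_6$ colored $1,2,3,1,2,3$, which is a strong odd coloring with even classes.
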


  A coloring of a graph $G$ is said to be a \textit{2-distance coloring}, if every pair of vertices at a distance at most 2 receive different colors \cite{Wegner}. Since, every vertex in the cycle is adjacent to exactly two vertices in the cycle and all the vertices in $\overline{K_m}$ are common to each vertex in $C_n$, the following observation can also be made. 

\begin{observation} \label{d>2}
    A strong odd coloring of $C_n \vee \overline{K_m}$, when restricted to the vertices in $C_n$ gives a 2-distance coloring.
\end{observation}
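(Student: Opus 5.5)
Observation \ref{d>2} needs two facts: the restriction is proper, and any two cycle vertices at distance exactly $2$ in $C_n$ get different colors. We use only the neighbourhoods of vertices on the cycle.

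Fix a strong odd coloring $\phi$ of $C_n \vee \overline{K_m}$ and restrict it to $V(C_n)$. Adjacent cycle vertices are adjacent in the join, so they receive different colors because $\phi$ is proper. We may take $m \ge 1$, since $\overline{K_m}$ is understood to be nonempty for the join. Then distances within the join are at most $2$, so ``2-distance'' must be read with respect to distances in $C_n$. This is how the preceding sentence of the paper frames it.

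The substantive case is $u,w$ at distance $2$ in $C_n$ with common cycle neighbour $v$, where $u,w,v$ are distinct; this needs $n\ge 4$, or $n=3$ where every pair is adjacent anyway. Suppose $\phi(u)=\phi(w)=c$. We count $|N(v)\cap \chi_c|$, where $N(v) = \{u,w\} \cup V(\overline{K_m})$. The color $c$ cannot appear on $\overline{K_m}$. Indeed, every vertex of $\overline{K_m}$ is adjacent to $u$, so a vertex $x$ of $\overline{K_m}$ with $\phi(x)=c$ would contradict properness along the edge $xu$. Hence $|N(v)\cap\chi_c| = 2$. This is nonzero and even, contradicting the strong odd condition at $v$. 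So $u$ and $w$ get different colors.

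The main obstacle is the case $n=4$. There $u$ and $w$ have two common cycle neighbours, but each of them separately gives the same count of $2$, so the argument is unchanged.
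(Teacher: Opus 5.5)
Your proof is correct and follows essentially the same route as the paper. The paper's one-line justification is that each cycle vertex has exactly two neighbours on $C_n$ plus all of $\overline{K_m}$, and properness keeps the colours of $\overline{K_m}$ disjoint from those on $C_n$, so two cycle vertices sharing a common cycle neighbour and a colour would make that colour appear exactly twice in the neighbourhood — which is your count. Your side remarks (measuring distance in $C_n$, and the cases $n=3$ and $n=4$) are fine but not needed, and the assumption $m\ge 1$ plays no role in the argument.
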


\begin{theorem} \label{wheels}
Let $W_n = C_n \vee K_1$ be the \textit{wheel graph} on $n+1$ vertices. Then,

    $\chi_{\text{so}}(W_n) = \begin{cases}
  4, & \text{if }~ n \equiv 3(mod~6)\\
  5,  & \text{if }~n \equiv 0, ~2 \text{ or }4(mod~6), ~n>8,~ n\neq15\\
  6,  & \text{if }~ n \equiv 1 \text{ or }5(mod~6), ~n>8\\
  7, & \text{if }~ n = 14\\
  n+1, & \text{if }~ n \leq 8\\  
\end{cases}$
\end{theorem}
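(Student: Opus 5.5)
The plan is to reduce the problem to a counting question about colorings of the rim $C_n$. The hub of $W_n$ is adjacent to every rim vertex, so in any strong odd coloring it gets a color not used on $C_n$. Take a coloring of the rim such that (i) it is a 2-distance coloring of $C_n$ (Observation \ref{d>2}) and (ii) every color used on $C_n$ is an odd color class (Observation \ref{H odd}). Adding a new color on the hub then gives a strong odd coloring. Indeed, each rim vertex sees its two rim neighbours in distinct colors (by (i)) and the hub color once, while the hub sees each rim color an odd number of times (by (ii)). Hence
\[
\chi_{\text{so}}(W_n)=1+k(n),
\]
where $k(n)$ is the least number of colors in a 2-distance coloring of $C_n$ all of whose color classes have odd size. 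The theorem therefore amounts to showing $k(n)=3,4,5,6,n$ in the respective cases.

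For the lower bounds I use two constraints. Each color class is an independent set in $C_n^2$, so it has size at most $\lfloor n/3\rfloor$; each class also has odd size; and the class sizes sum to $n$. The cases follow from these.
\begin{itemize}
\item $k\ge 3$ always. If $k=3$, the rim must be colored periodically $123123\cdots$, which forces $3\mid n$ and class size $n/3$. This size is odd exactly when $n\equiv 3 \pmod 6$, so $k(n)=3$ iff $n\equiv 3\pmod 6$.
\item For $n$ odd, $k=4$ is impossible, since four odd numbers have an even sum. So $k(n)\ge 5$ when $n\equiv 1,5\pmod 6$.
\item For $n\le 8$, the bound $\lfloor n/3\rfloor\le 2$ together with oddness forces every class to be a singleton, so $k(n)=n$.
\item For $n=14$, each class has odd size at most $4$, hence size $1$ or $3$. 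Four such classes give at most $12<14$ vertices, and five classes give an odd total. So $k(14)\ge 6$, and $3+3+3+3+1+1=14$ suggests this is attainable.
\end{itemize}

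For the upper bounds I build explicit rim colorings by concatenating blocks that are each internally 2-distance colored, namely blocks of length $3$ ($abc$), $4$ ($abcd$) and $5$ ($abcde$). Consecutive blocks must be chosen so that every three cyclically consecutive vertices, including across block boundaries and the wrap-around, are distinct. Since a length-$4$ block may be any permutation of the four colors, the class-size vector can be adjusted block by block.
\begin{itemize}
\item For $n\equiv 0,2,4\pmod 6$, $n\ge 10$, $n\ne 14$: I aim for four odd class sizes summing to $n$, for instance $p$ blocks $123$ and $q$ blocks $1234$ with $p$ even and $q$ odd. This covers $n=10,12,16,\dots$. Where this family fails, I would redistribute using permuted length-$4$ blocks such as $1243$ or $2134$, to control which color is the "extra" one in each block.
\item For $n\equiv 1,5\pmod 6$, $n>8$: I use five colors, with blocks $12345$ plus $123$/$1234$-type blocks, arranged so that all five counts are odd.
\item For $n=14$: I take six classes of sizes $3,3,3,3,1,1$, for example $123\,4\,123\,4\,1234\,56$ suitably arranged and checked.
\end{itemize}

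I expect the main obstacle to be the residue-class constructions. Specifically, I must give, for every $n$ in a residue class mod $6$ above the threshold, a block word whose class counts are all odd and whose junctions (including the wrap-around) satisfy the distance-2 condition. The first thing I would try is to fix a small base word for each residue that handles the initial value, then show that inserting two copies of a suitable length-$3$ block (adding $6$ and preserving every parity) keeps the coloring valid, and so induct on $n$ in steps of $6$. The exceptional value $n=14$ is exactly where the four-color counting bound blocks this scheme, so it has to be handled separately as above.
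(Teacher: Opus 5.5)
Your proposal is correct and is essentially the paper's argument. You reduce to a $2$-distance coloring of the rim with all color classes odd plus a fresh hub color, get the lower bounds from the $\lfloor n/3\rfloor$ class-size bound and parity, and build the rim colorings from $1234$ and $123$ blocks (plus singleton colors $5,6$), extending by $6$ through appended $123123$. No redistribution is needed: your family with $p$ even and $q$ odd already covers every required $n\equiv 0,2,4 \pmod 6$ (take $q=3,5,1$ respectively, which are exactly the paper's words), and one $12345$ block together with even numbers of $1234$ and $123$ blocks covers $n\equiv 1,5\pmod 6$, $n>8$. You are also right to treat $n=14$ as the exception; the statement's ``$n\neq 15$'' is a typo for $n\neq 14$.
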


\begin{proof}
If $n \leq 8$, by Observation \ref{d>2}, no color can appear more than 2 times in the cycle since among any three vertices of a cycle with at most 8 vertices, there is at least one pair with distance at most 2. But then, by Observation \ref{H odd}, this implies that every color appears exactly once, i.e; all vertices are colored with distinct colors. So, $\chi_{\text{so}}(C_n \vee K_1) = n+1$, for $ n\leq 8$. \par
For $n>8$, as a direct consequence of Observation \ref{d>2}, a color can be used at most $\lfloor \frac{n}{3} \rfloor$ times to color the vertices of $C_n$. Also, by applying Observation \ref{H odd}, a color can be used at most $\lfloor \frac{n}{3} \rfloor$ times, if $\lfloor \frac{n}{3} \rfloor$ is odd and $\lfloor \frac{n}{3} \rfloor - 1$ times, otherwise. So, when $n \equiv 0,1 \text{ or } 2\text{ (mod 6)}$, a color can be used at most $\lfloor \frac{n}{3} \rfloor - 1$ times to color $C_n$ and it can be used at most $\lfloor \frac{n}{3} \rfloor$ times, when $n \equiv 3,4 \text{ or } 5 \text{ (mod 6)}$.\par

When $n \equiv 0 \text{ (mod 6)}$, we need at least 4 colors to color $C_n$. Also, the colors $\{1,2,3,4\}$ can be sequentially used 3 times to color the first 12 vertices and the remaining vertices, if any, can be colored sequentially using the colors $\{1,2,3\}$.\par

When $n \equiv 1 \text{ or }5 \text{ (mod 6)}$, then again, we need at least 4 colors to color $C_n$. But, $n$ being odd and the cardinality of each color class being odd (by Observation \ref{H odd}), the number of colors must also be odd. So, we need at least 5 colors to color $C_n$. Now, for $n \equiv 1 \text{ (mod 6)}$, the colors $\{1,2,3,4\}$ can be sequentially used 3 times to color the first 12 vertices, color 5 to the next vertex and the remaining vertices, if any, can be colored sequentially using the colors $\{1,2,3\}$. For $n \equiv 5 \text{ (mod 6)}$, all vertices except the last two can be colored sequentially using colors $\{1,2,3\}$ and the last two vertices can be colored using 4 and 5.\par

When $n \equiv 2 \text{ (mod 6)}$,  the colors $\{1,2,3,4\}$ can be sequentially used 5 times to color the first 20 vertices (leaving $n=14$ as a special case) and the remaining vertices, if any, can be colored sequentially using the colors $\{1,2,3\}$. When $n=14$, since a color can be used at most 3 times $(
\lfloor\frac{14}{3}\rfloor - 1 = 3)$ to color $C_{14}$, we need at least 5 colors to color $C_{14}$. But, 14 being even, and by Observation \ref{H odd}, the number of colors used to color $C_{14}$ must be even. Therefore, we need at least 6 colors to color $C_{14}$. Now, the colors $\{1,2,3,4\}$ sequentially used 3 times followed by 5 and 6, gives the required coloring of $C_{14}$. \par

When $n \equiv 3 \text{ (mod 6)}$, the colors $\{1,2,3\}$ can be used sequentially $\frac{n}{3}$ times to color the vertices of $C_n$. When When $n \equiv 4 \text{ (mod 6)}$, then again, the colors $\{1,2,3\}$ can be used sequentially $\lfloor\frac{n}{3}\rfloor$ times to color the $n-1$ vertices of $C_n$ and the last vertex can be colored using 4.\par

In all these cases, $K_1$, being a universal vertex, needs a color different from the colors used to color $C_n$. Hence, the theorem.
\end{proof}

The coloring in the proof of the above theorem can be easily extended to $C_n \vee \overline{K_m}$. Since all the $m$ vertices of $\overline{K_m}$ are adjacent to all the vertices of $C_n$, the colors used for the vertices of $C_n$ cannot be given to the vertices of $\overline{K_m}$. As $V(\overline{K_m})$ is independent the same color can be used for all these vertices, if $m$ is odd. If $m$ is even, then $m-1$ vertices can be given the same color and the remaining vertex must be given a new color. Thus we have the following theorem. 

\begin{theorem} \label{2 centrals}
     $\chi_{\text{so}}(C_n \vee \overline{K_m}) = \begin{cases}
  \chi_{\text{so}}(W_n), & \text{if }~ n \text{ is odd}\\
   \chi_{\text{so}}(W_n) + 1, & \text{if }~ n \text{ is even}
\end{cases}$
\end{theorem}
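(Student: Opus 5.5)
The plan is to reduce everything to the proof of Theorem \ref{wheels} by splitting the colors into those used on $C_n$ and those used on $\overline{K_m}$. These two sets are disjoint, because every vertex of $\overline{K_m}$ is adjacent to every vertex of $C_n$. So $\chi_{\text{so}}(C_n \vee \overline{K_m}) = a + b$, where $a$ is the least number of colors on $C_n$ and $b$ the least number on $\overline{K_m}$, provided the constraints on the two parts can be checked independently.

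\textbf{The cycle part.} Let $a(n)$ be the minimum number of colors in a coloring of $C_n$ that is a 2-distance coloring and in which every color class is odd. By Observations \ref{H odd} and \ref{d>2}, applied with general $m$, the restriction to $C_n$ of any strong odd coloring of $C_n \vee \overline{K_m}$ is such a coloring. Going back through the proof of Theorem \ref{wheels}:
\begin{itemize}
\item the lower bounds there (at most $\lfloor n/3\rfloor$ or $\lfloor n/3\rfloor-1$ vertices per color, parity of the number of colors, the special cases $n\le 8$ and $n=14$) used only these two observations;
\item the explicit colorings there are 2-distance colorings with odd classes.
\end{itemize}
Hence $a(n)=\chi_{\text{so}}(W_n)-1$, independently of $m$.

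\textbf{The independent part.} Each vertex of $C_n$ has in its neighborhood the whole of $V(\overline{K_m})$. Every color used there must therefore occur an odd number of times on $\overline{K_m}$, so the multiset of class sizes on $\overline{K_m}$ consists of odd numbers summing to $m$.
\begin{itemize}
\item If $m$ is odd, one color suffices.
\item If $m$ is even, the number of colors must be even, hence at least $2$. Coloring $m-1$ vertices with one new color and the last with another achieves $2$.
\end{itemize}
So $b=1$ or $b=2$ according to the parity of $m$.

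\textbf{Verifying the combined coloring.} I then check that the combination is a strong odd coloring.
\begin{itemize}
\item A vertex of $\overline{K_m}$ has neighborhood exactly $V(C_n)$, where every class is odd.
\item A vertex of $C_n$ sees its two cycle neighbors, which have distinct colors by the 2-distance property, so each of those colors appears once. It also sees all of $\overline{K_m}$, where every class is odd, and these colors are disjoint from the cycle colors.
\item Properness is clear.
\end{itemize}
Adding up gives $\chi_{\text{so}}(W_n)$ when $m$ is odd and $\chi_{\text{so}}(W_n)+1$ when $m$ is even.

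This reasoning makes the case split depend on the parity of $m$, as in the paragraph preceding the theorem, rather than on $n$. I would state the result in that form, reading the ``$n$'' in the displayed cases as $m$. In particular, the planar case $m=2$ gives $\chi_{\text{so}}(W_n)+1$ for every $n$.

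The main obstacle is the first step. I must confirm that every lower-bound argument in Theorem \ref{wheels} genuinely used only Observations \ref{H odd} and \ref{d>2}, and never the single center. Only then is $a(n)$ independent of $m$. The one place to recheck is the $n=14$ and $n\le 8$ parity arguments, which I expect go through verbatim.
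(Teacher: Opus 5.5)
Your proposal is correct and follows the paper's route (disjoint color sets on $C_n$ and $\overline{K_m}$, one extra color when $m$ is odd and two when $m$ is even), with the necessity and sufficiency checks written out; you are also right that the case split must be on the parity of $m$ rather than $n$ --- for instance, $C_9\vee\overline{K_2}$ needs $5=\chi_{\text{so}}(W_9)+1$ colors although $9$ is odd. Your ``main obstacle'' is moot, because your exact characterization applied with $m=1$ already gives $\chi_{\text{so}}(W_n)=a(n)+1$, so there is no need to re-audit the proof of Theorem~\ref{wheels}.
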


\section{Strong odd chromatic number of planar graphs}
We begin this section by recalling the graph operation, one point union of graphs, and also define its generalization. This graph operation is of interest for us as it preserves planarity.

\begin{defn*} \cite{She}
   Let $x$ be a vertex in a graph $G$ and $n \in \textbf{N}$. The one point union of $n$ copies of $G$ about $x$, denoted by $I_x^n(G)$, is the graph obtained by identifying $x$ in $n$ copies of $G$. 
\end{defn*}
This calls for a natural generalization in the following way. 

 \begin{defn*}
     Let $G_i$ be a collection of graphs and let $x_i \in V(G_i)$, for $1\leq i \leq n$. The generalized one point union of $G_i$'s, about the vertices $x_i, ~1\leq i \leq n$, denoted by $I_{x}(G_1,G_2,\ldots,G_n)$ is the graph obtained by taking all $G_i$'s and identifying every vertex $x_i, ~1\leq i \leq n$ and renaming the new vertex as $x$. 
 \end{defn*}


\begin{theorem}
Let $G_n$ be the graph $C_n \vee \overline{K_2}$, for every $n \geq 3$ where $V(\overline{K_2}) = \{x_n,y_n\}$. Let $I_y(G_m,G_n)$ be the one point union of $G_m$ and $G_n$ obtained by identifying $y_m$ and $y_n$ and naming the new vertex as $y$. Then, $\chi_{\text{so}}(I_y(G_m,G_n)) = \chi_{\text{so}}(W_{m}) + \chi_{\text{so}}(W_n)-1$.
\end{theorem}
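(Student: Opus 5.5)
The plan is to prove matching lower and upper bounds by analysing how the color palettes used on the two cycles interact, writing $H=I_y(G_m,G_n)$. First record the neighborhoods in $H$:
\begin{itemize}
\item $y$ is adjacent to every vertex of $C_m\cup C_n$;
\item $x_m$ is adjacent exactly to $V(C_m)$, and $x_n$ exactly to $V(C_n)$;
\item a vertex of $C_m$ sees its two cycle neighbours, $x_m$ and $y$, and symmetrically for $C_n$.
\end{itemize}

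\textbf{Lower bound.} Fix a strong odd coloring of $H$.

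The key step is to show that no color appears on both $C_m$ and $C_n$. Suppose color $c$ appears on both cycles. Applying the argument of Observation \ref{H odd} to $x_m$, the number of vertices of $C_m$ colored $c$ is odd. Applying it to $x_n$, the same holds for $C_n$. Then $|N(y)\cap c|$ is a sum of two odd numbers, hence even and positive, which contradicts the strong odd condition at $y$. So the palettes $A$ on $C_m$ and $B$ on $C_n$ are disjoint.

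Next, restrict the coloring to $C_m$ and give the hub of $W_m$ a fresh color. This is a strong odd coloring of $W_m$: cycle vertices see distinct colors on their two cycle neighbours (Observation \ref{d>2}), and the hub sees only odd color classes. Hence $|A|\ge \chi_{\text{so}}(W_m)-1$, and likewise $|B|\ge \chi_{\text{so}}(W_n)-1$.

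Finally, $y$ is adjacent to every vertex using a color of $A\cup B$, so it needs a further color. This gives at least $\chi_{\text{so}}(W_m)+\chi_{\text{so}}(W_n)-1$ colors in total.

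\textbf{Upper bound.} Take an optimal strong odd coloring of $W_m$ from Theorem \ref{wheels} and restrict it to $C_m$, using palette $A$ with $|A|=\chi_{\text{so}}(W_m)-1$. Do the same for $C_n$ with a disjoint palette $B$. Give $y$ a new color $z$. Color $x_m$ with some color of $B$ and $x_n$ with some color of $A$; this reuse is the point that saves the two extra colors.

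We then verify the conditions vertex by vertex.
\begin{itemize}
\item Properness: $x_m$ has neighbours only in $C_m$, colored from $A$, so a $B$-color is allowed; symmetrically for $x_n$.
\item A vertex of $C_m$ sees two distinct $A$-colors, one $B$-color (on $x_m$) and $z$ (on $y$). Each color appears exactly once, so the condition holds. The same argument applies to vertices of $C_n$.
\item $x_m$ sees the coloring of $C_m$, whose classes are all odd because it came from a strong odd coloring of $W_m$. The same holds for $x_n$.
\item $y$ sees $C_m\cup C_n$. Since the palettes are disjoint, each class it sees is a single odd cycle class.
\end{itemize}
The total number of colors is $|A|+|B|+1=\chi_{\text{so}}(W_m)+\chi_{\text{so}}(W_n)-1$.

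The main obstacle is the disjointness of the palettes in the lower bound; everything else is routine checking. One must also make sure that the reduction to $W_m$ uses only properties forced at $x_m$ and at the cycle vertices, so the bound $|A|\ge\chi_{\text{so}}(W_m)-1$ holds. It should be noted explicitly that $x_m$ cannot reuse $z$, since a cycle vertex would then see $z$ twice. This explains why $x_m$ must borrow a color from the other cycle's palette rather than from $y$.
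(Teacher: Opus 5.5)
Your proof is correct and follows the same route as the paper's. In both, $x_m$ and $x_n$ force odd classes on each cycle, $y$ forces the two palettes to be disjoint, the reduction to $W_m$ and $W_n$ bounds each palette, one extra color goes to $y$, and the upper bound comes from letting $x_m$ and $x_n$ borrow colors from the opposite cycle; your write-up is in fact more complete, since you check the strong odd condition at every vertex of the construction, which the paper only asserts.
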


\begin{proof}
Let $C$ be a $\chi_\text{so}$-coloring of $I_y(G_m,G_n)$ with colors $c_1,c_2,\ldots,c_k$ and let $\chi_i$ be the color class with color $c_i$. Since, $x_m$ and $y$ are adjacent to all the vertices of $C_m$ and $C$ being a proper coloring, they cannot receive any color used to color $V(C_m)$. Also, since the vertices of $C_m$ do not have any neighbors outside $V(G_m)$, it follows that $C$ restricted to $C_m \vee \{x_m\} \equiv W_m$ is a strong odd coloring of $W_m$. The same arguments are applicable to $G_n$ also. Now, since $N(y) = V(C_m) \cup V(C_n)$ and $|\chi_i \cap V(C_m)|$ and $|\chi_i \cap V(C_n)|$ are both odd, the colors used to color $V(C_m)$ and $V(C_n)$ must be all distinct. Also, we can use a color used in $V(C_m)$ to color $x_n$ and a color used in $V(C_n)$ to color $x_m$. The vertex $y$ needs a color different from all these colors. Thus $|C| = \chi_{\text{so}}(W_{m}) + \chi_{\text{so}}(W_n)-1$.
\end{proof}
As we had already mentioned, it was conjectured in \cite{Pang} that every planar graph admits a strong odd 13-coloring. But, the above theorem in fact disproves this conjecture.

Applying Theorem \ref{wheels} in this result, we observe that $7 \leq \chi_{\text{so}}(I_y(G_m,G_n)) \leq 17$. In particular, we have\\
$\chi_{\text{so}}(I_y(G_7,G_{15})) = 14$,\\
$\chi_{\text{so}}(I_y(G_6,G_8)) = \chi_{\text{so}}(I_y(G_7,G_7)) = \chi_{\text{so}}(I_y(G_8,G_{15}))$ $ = 15$,\\
$\chi_{\text{so}}(I_y(G_7,G_8)) = 16$ and\\ $\chi_{\text{so}}(I_y(G_8,G_8)) = 17$.\par

Since $I_y(G_m,G_n)$ is planar for every $m, n \in \mathbb{N}$, these graphs serve as explicit counter examples for the conjecture posed by Pang et al. \cite{Pang}. In fact, for $n>9$ and $n\equiv 1~or ~5(mod~6)$, $\chi_{\text{so}}(I_y(G_8,G_n)) = 14$. This shows that there are infinitely many graphs serving as counter examples for the conjecture. As far as we have observed, with $\chi_{\text{so}}(I_y(G_8,G_8)) = 17$  (illustrated in Figure \ref{17}), $I_y(G_8,G_8)$ is the planar graph identified with the highest strong odd chromatic number.

\begin{figure}[H]
\centering
\begin{tikzpicture}[x=1.2cm, y=1.2cm,scale=0.7, transform shape]
         \node[circle, draw] (a1) at (0,3.5) {$1$};
\node[circle, draw] (a2) at (0,2.5)  {$2$};
\node[circle, draw] (a3) at (0,1.5) {$3$};
\node[circle, draw] (a4) at (0,0.5) {$4$};
\node[circle, draw] (a5) at (0,-0.5)  {$5$};
\node[circle, draw] (a6) at (0,-1.5) {$6$};
\node[circle, draw] (a7) at (0,-2.5) {$7$};
\node[circle, draw] (a8) at (0,-3.5) {$8$};
\node[circle, draw] (b1) at (-3,0) {$9$};
\node[circle, draw] (b2) at (4,0) {$17$};

\draw (a1) edge (a2);
\draw (a2) edge (a3);
\draw (a3) edge (a4);
\draw (a4) edge (a5);
\draw (a5) edge (a6);
\draw (a6) edge (a7);
\draw (a7) edge (a8);

\draw (a1) edge (b1);
\draw (a2) edge (b1);
\draw (a3) edge (b1);
\draw (a4) edge (b1);
\draw (a5) edge (b1);
\draw (a6) edge (b1);
\draw (a7) edge (b1);
\draw (a8) edge (b1);

\draw (a1) edge (b2);
\draw (a2) edge (b2);
\draw (a3) edge (b2);
\draw (a4) edge (b2);
\draw (a5) edge (b2);
\draw (a6) edge (b2);
\draw (a7) edge (b2);
\draw (a8) edge (b2);

\node[circle, draw] (c1) at (8,3.5) {$9$};
\node[circle, draw] (c2) at (8,2.5)  {$10$};
\node[circle, draw] (c3) at (8,1.5) {$11$};
\node[circle, draw] (c4) at (8,0.5) {$12$};
\node[circle, draw] (c5) at (8,-0.5)  {$13$};
\node[circle, draw] (c6) at (8,-1.5) {$14$};
\node[circle, draw] (c7) at (8,-2.5) {$15$};
\node[circle, draw] (c8) at (8,-3.5) {$16$};
\node[circle, draw] (d1) at (11,0) {$1$};

\draw (c1) edge (c2);
\draw (c2) edge (c3);
\draw (c3) edge (c4);
\draw (c4) edge (c5);
\draw (c5) edge (c6);
\draw (c6) edge (c7);
\draw (c7) edge (c8);

\draw (c1) edge (b2);
\draw (c2) edge (b2);
\draw (c3) edge (b2);
\draw (c4) edge (b2);
\draw (c5) edge (b2);
\draw (c6) edge (b2);
\draw (c7) edge (b2);
\draw (c8) edge (b2);

\draw (c1) edge (d1);
\draw (c2) edge (d1);
\draw (c3) edge (d1);
\draw (c4) edge (d1);
\draw (c5) edge (d1);
\draw (c6) edge (d1);
\draw (c7) edge (d1);
\draw (c8) edge (d1);

\draw[-] (a1) edge [out=180, in=180, looseness=2] (a8);
\draw[-] (c1) edge [out=0, in=0, looseness=2] (c8);

\end{tikzpicture}
\vspace{-0.5cm}
\caption{Planar graph $G$ with $\chi_{\text{so}}(G) = 17$}
\label{17}
\end{figure}
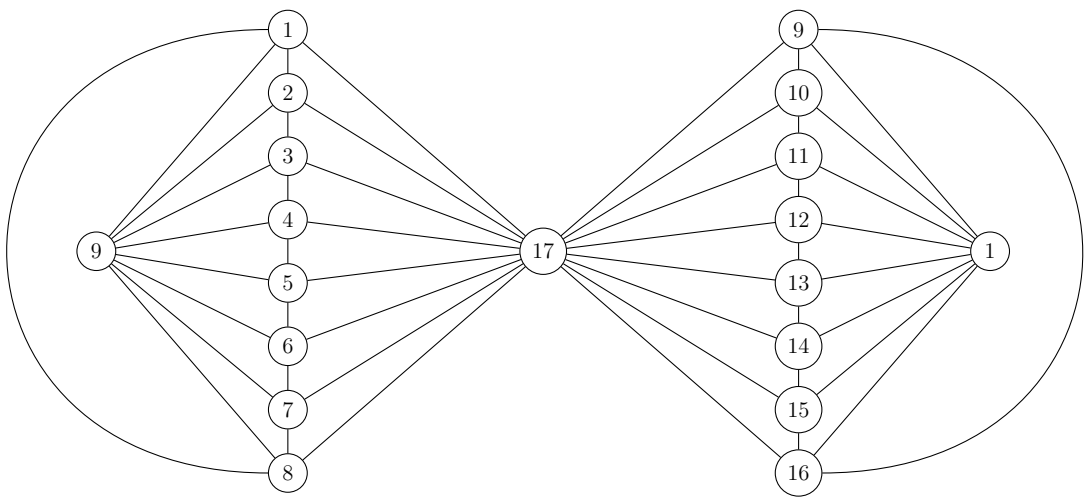

\section{Concluding remarks and future scope}
Strong odd coloring of graphs, though introduced very recently, has created attention and interest among researchers across the globe. Though most papers related to this concept revolves around bounds for classes of graphs, identifying the exact value of strong odd chromatic number of graph classes is still a good question. Though Cartesian and lexicographic products of graphs have been explored for their strong odd chromatic number \cite{Caro}, there are other graph products for which the same exposition be made. 

In this short note, we focused only on the conjecture posed in \cite{Pang}. Though we have identified some exact values and bounds for the corona operator, join and one point union of graphs during our search for a planar graph with strong odd chromatic number greater than 13, we have restricted this article to the infinite family of graphs which serves as a counter example to the conjecture. Although this paper establishes the existence of planar graphs $G$ with $\chi_\text{so}(G)=17$, we do not claim that this is the best possible. Rather we conclude by posing it as a problem to identify planar graphs with strong odd chromatic number greater than 17.

Another interesting aspect is that in \cite{Caro} they have given two planar graphs for which strong odd chromatic number is 12 and in \cite{Pang} only one planar graph with strong odd chromatic number 13 is given. But using our results, we can produce infinite families of graphs for which strong odd chromatic number is 12, 13 and 14. Though we are able to produce planar graphs with strong odd chromatic number 15, 16 and 17, we do not have a family of graphs is these cases. It may be true that there are only finitely many planar graphs for which strong odd chromatic number is greater than 14. We conclude this paper with the following problems.

\vspace{0.5cm}
\noindent{\bf Problem 1:} Identify planar graphs with strong odd chromatic number greater than 17.

\vspace{0.5cm}
\noindent{\bf Problem 2:} Find a constant $c < 388$ such that $\chi_\text{so}(G) < c$, for every planar graph $G$.

\vspace{0.5cm}
\noindent{\bf Problem 3:} Find a constant $c$ such that there are only finitely many planar graphs $G$ with $\chi_\text{so}(G) > c$.

\vspace{0.5cm}
\noindent \textbf{Acknowledgment:} The first author is supported by the Senior Research Fellowship (09/0239(17181)/2023-EMR-I) of CSIR (Council of Scientific and Industrial Research, India). The second author is supported by Cochin University of Science and Technology under the University Research Fellowship.

\end{document}